\numberwithin{equation}{section}
\newtheorem{Theorem}{Theorem}[section]
\newtheorem{Proposition}[Theorem]{Proposition}
\newtheorem{cor}[Theorem]{Corollary}
\newtheorem{lemma}[Theorem]{Lemma}
\theoremstyle{remark}
\newtheorem{Example}[Theorem]{Example}
\newtheorem{Remark}[Theorem]{Remark}
\begin{document}
\title{Generalized notions of amenability for a class of matrix algebras}

\author{A. Sahami}
\address{Faculty of Basic sciences, Department of Mathematics, Ilam University, P.O.Box 69315-516, Ilam,
Iran.} \email{amir.sahami@aut.ac.ir}


\begin{abstract}
We investigate the notions of amenability and its related
homological notions for a class of $I\times I$-upper triangular
matrix algebra, say $UP(I,A)$, where $A$ is a Banach algebra
equipped with a non-zero character. We show that $UP(I,A)$ is
pseudo-contractible (amenable) if and only if $I$ is singleton and
$A$ is pseudo-contractible (amenable), respectively. We also study
the notions of pseudo-amenability and approximate biprojectivity of
$UP(I,A)$.
\end{abstract}

\subjclass[2010]{Primary 46M10 Secondary,  43A07, 43A20.}

\keywords{Upper triangular Banach algebra, Amenability, Left
$\phi$-amenability, Approximate biprojectivity.}

\maketitle

\section{Introduction and Preliminaries}
B. E. Johnson studied  the class  of amenable Banach algebras.
Indeed a Banach algebra $A$ is amenable if every continuous
derivation $D:A\rightarrow X^{*}$ is inner, for every Banach
$A$-bimodule $X$, that is, there exists $x_{0}\in X^{*}$ such that
$$D(a)=a\cdot x_{0}-x_{0}\cdot a\quad(a\in A).$$ He also showed that  $A$ is amenable if and only if
there exists a bounded net $(m_{\alpha})$ in $A\otimes_{p}A$ such
that $$a\cdot m_{\alpha}-m_{\alpha}\cdot a\rightarrow 0,\quad
\pi_{A}(m_{\alpha})a\rightarrow a\qquad(a\in A),$$ where
$\pi_{A}:A\otimes_{p}A\rightarrow A$ is given by $\pi_{A}(a\otimes
b)=ab$ for every $a,b\in A$, see \cite{Joh}. About the same time A.
Ya. Helemskii defined the homological notions of biflatness and
biprojectivity for Banach algebras. In fact a Banach algebra $A$ is
called biflat (biprojective), if there exists a bounded $A$-bimodule
morphism $\rho:A\rightarrow (A\otimes_{p}A)^{**}$
($\rho:A\rightarrow A\otimes_{p}A$) such that
$\pi_{A}^{**}\circ\rho$ is the canonical embedding of $A$ into
$A^{**}$ ($\rho$ is a right inverse for $\pi_{A}$), respectively see
\cite{hel}. Note that a Banach algebra $A$ is amenable if and only
if $A$ is biflat and $A$ has a bounded approximate identity. It is
known that for a locally compact group $G$, $L^{1}(G)$ is biflat
(biprojective) if and only if $G$ is amenable(compact),
respectively. Amenability of some matrix algebras studied by
Esslamzadeh \cite{Ess} and also biflatness and biprojectivity of
some semigroup algebras related to matrix algebras investigated by
Ramsden in \cite{rams}.

Recently approximate versions of amenability and homological
properties of Banach algebras have been under more observations. In
\cite{zhang} Zhang  introduced the notion of approximately
biprojective Banach algebras, that is, $A$ is approximately
biprojective if there exists a net of $A$-bimodule morphism
$\rho_{\alpha}:A\rightarrow A\otimes_{p}A$ such that
$$\pi_{A}\circ\rho_{\alpha}(a)\rightarrow a\quad(a\in A).$$ Author
with A. Pourabbas investigated approximate biprojectivity of
$2\times 2$ upper triangular Banach algebra which is a matrix
algebra, also we characterized approximate biprojectivity of Segal
algebras and weighted group algebras. We show that a Segal algebra
$S(G)$ is approximately biprojective if and only if $G$ is compact
and also we show that $L^{1}(G,w)$ is approximately biprojective if
and only if $G$ is compact, provided that $w\geq 1$ is a continuous
weight function, see \cite{sah col} and \cite{sah3}.

 Approximate amenable Banach algebras have been introduced by
Ghahramani and Loy. Indeed a Banach algebra $A$ is approximate
amenable if for every continuous derivation $D:A\rightarrow X^{*}$,
there exists a net $(x_{\alpha})$ in $X^{*}$ such that
$$D(a)=\lim_{\alpha}a\cdot x_{\alpha}-x_{\alpha}\cdot a \quad(a\in
A).$$ Other extensions of amenability are pseudo-amenability and
pseudo-contractibility. A Banach algebra $A$ is pseudo-amenable
(pseudo-contractible) if there exists a not necessarily bounded net
$(m_{\alpha})$ in $A\otimes_{p}A$ such that
$$a\cdot m_{\alpha}-m_{\alpha}\cdot a\rightarrow 0,\quad(a\cdot m_{\alpha}=m_{\alpha}\cdot a),\qquad \pi_{A}(m_{\alpha})a\rightarrow a\quad (a\in A).$$
For more information about these new concepts the reader referred to
\cite{ghah pse}, \cite{gen 1} and \cite{gen 2}. Recently in
\cite{essmail arch} and \cite{essmail sem} pseudo-amenability and
pseudo-contractibility of certain semigroup algebras,  using the
properties of matrix algebras, have been studied.

In this paper, we investigate amenability and its related
homological notions for a class of matrix algebras. We show that for
a Banach algebra $A$ with a non-zero character,  $I\times I$ upper
triangular Banach algebra $UP(I,A)$ is pseudo-contractible
(amenable) if and only if $I$ is singleton and $A$ is
pseudo-contractible (amenable), respectively. Also we characterize
whether $UP(I,A)$ is approximate amenable, pseudo-amenable and
approximate biprojective. The paper concluded by studying
amenability and approximate biprojectivity of some semigroup
algebras related to a matrix algebra.

We remark some standard notations and definitions that we shall need
in this paper. Let $A$ be a Banach algebra. Throughout this paper
the character space of $A$ is denoted by $\Delta(A)$, that is, all
non-zero multiplicative linear functionals on $A$.  Let $A$  be a
Banach algebra.  The projective tensor product
 $A\otimes_{p}A$ is a Banach $A$-bimodule via the following actions
$$a\cdot(b\otimes c)=ab\otimes c,~~~(b\otimes c)\cdot a=b\otimes
ca\hspace{.5cm}(a, b, c\in A).$$

Let $A$ be a Banach algebra and $I$ be a non-empty set. $UP(I,A)$ is
denoted for  the set of all $I\times I$ upper triangular matrices
which entries come from $A$ and
$$||(a_{i,j})_{i,j\in I}||=\sum_{i,j\in I}||a_{i,j}||<\infty.$$ With
the usual matrix operations and $||\cdot||$ as a norm, $UP(I,A)$
becomes a Banach algebra.
\section{a class of matrix algebras and generalized notions of amenability}

In this section we investigate generalized notions of amenability
for upper triangular Banach algebras.

We remind that a Banach algebra $A$ with $\phi\in\Delta(A)$ is
called left(right) $\phi$-contractible, if there exists $m\in A$
such that $am=\phi(a)m(ma=\phi(a)m)$ and $\phi(m)=1$ for every $a\in
A$, respectively. For more information the reader referred to
\cite{nas}.
\begin{Theorem}\label{main}
Let $I$ be a non-empty set and $A$ be a unital Banach algebra with
$\Delta(A)\neq \emptyset.$ $UP(I,A)$ is pseudo-contractible if and
only if $I$ is singleton and $A$ is pseudo-contractible.
\end{Theorem}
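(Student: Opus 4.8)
The plan is to prove both directions. The easy direction is: if $I$ is a singleton, then $UP(I,A) \cong A$ as Banach algebras, so pseudo-contractibility transfers directly. Let me think about the hard direction — assume $UP(I,A)$ is pseudo-contractible and deduce $I$ is a singleton and $A$ is pseudo-contractible.

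First, a fact I'd want: it is known (and the paper cites this circle of ideas) that a pseudo-contractible Banach algebra with a nonzero character... hmm, actually the key structural fact is that pseudo-contractibility is inherited by quotients and by closed ideals that are complemented, and more usefully: if $\mathcal{A}$ is pseudo-contractible then for every $\phi \in \Delta(\mathcal{A})$, $\mathcal{A}$ is both left and right $\phi$-contractible (this follows by applying $\phi \otimes \mathrm{id}$ or $\mathrm{id}\otimes\phi$ to the diagonal net $m_\alpha$ with $a\cdot m_\alpha = m_\alpha \cdot a$). Actually even more: pseudo-contractibility implies the existence of a central approximate identity.
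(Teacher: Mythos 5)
Your proposal stops at the planning stage: you record the easy direction (singleton $I$ gives $UP(I,A)\cong A$) and you list two consequences of pseudo-contractibility --- that it forces left and right $\phi$-contractibility for every character, and that it yields a central approximate identity --- but you never carry out the argument that $I$ must actually be a singleton, which is the entire content of the hard direction. Listing the right tools is not the proof: nothing in what you wrote rules out $I$ infinite, and nothing rules out $I$ finite with $\abs{I}\geq 2$. These are two separate obstructions and each needs a concrete argument using the upper-triangular structure.

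For comparison, the paper handles them as follows. To exclude infinite $I$, it uses the central approximate identity $(e_\alpha)$: commuting $e_\alpha$ with the matrix units $F_{i,j}$ (the matrices with $e_A$ in the $(i,j)$ slot, which exist because $A$ is unital) forces all diagonal entries of $e_\alpha$ to be equal, and for infinite $I$ an $\ell^1$-summable matrix with equal diagonal entries must have zero diagonal, contradicting $e_\alpha F_{i,i}\to F_{i,i}$. To exclude $1<\abs{I}<\infty$, it takes the character $\psi(a)=\phi(a_{i_n,i_n})$ built from the last index, passes (via left and right $\psi$-contractibility, which your listed facts do give) to the closed ideal $J$ of matrices supported in the last column, obtains $m\in J$ with $jm=mj=\psi(j)m$ and $\psi(m)=1$, and then a direct computation with specific elements of $J$ (one with $\phi$-value $1$ in the last diagonal slot, one with entries above it) produces $0=\phi(a_1)\phi(x_n)=1$, a contradiction. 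Your proposal would be on the right track if continued --- the facts you cite are exactly the paper's starting points --- but as written it contains no argument at the point where the theorem is actually proved, so it must be counted as having a genuine gap.
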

\begin{proof}
Let $UP(I,A)$ be pseudo-contractible. Then $UP(I,A)$ has a central
approximate identity, say $(e_{\alpha})$. Put $F_{i,j}$ for a matrix
 belongs to $UP(I,A)$ which $(i,j)$-th entry is $e_{A}$ and others
 are zero, where  $e_{A}$ is  an identity of  $A$. Thus $F_{i,j}e_{\alpha}=e_{\alpha}F_{i,j}$ for every $i,j\in
 I.$ This equation implies that the entries on main diagonal of
 $e_{\alpha}$ is
 equal.  Suppose conversely that $I$ is infinite. Since the entries on main diagonal of $e_{\alpha}$ are
 equal, it implies that $||e_{\alpha}||=\infty$ or the main diagonal of $e_{\alpha}$ is zero. In the case $||e_{\alpha}||=\infty$,  $e_{\alpha}$
 does not belong to $UP(I,A)$ which is impossible.  Otherwise if the main diagonal of $e_{\alpha}$ is
 zero, then $e_{\alpha}F_{i,i}=0$. Thus $0=e_{\alpha}F_{i,i}\rightarrow
 F_{i,i}$ which is impossible, hence $I$ must be finite. Suppose
 that $I=\{i_{1},i_{2},...,i_{n}\}$ and $\phi\in\Delta(A)$. Define
 $\psi\in\Delta(UP(I,A))$ by $\psi((a_{i,j})_{i,j\in I})=\phi(a_{i_{n},i_{n}})$ for every
 $(a_{i,j})\in UP(I,A)$. Since $UP(I,A)$ is pseudo-contractible, by
 \cite[Theorem 1.1]{alagh1}
  $UP(I,A)$ is left and right $\psi$-contractible. Set $$J=\{(a_{i,j})\in
 UP(I,A)|a_{i,j}=0,\text{for all}\quad j\neq i_{n}\}.$$ It is clear $J$ is a closed ideal of $UP(I,A)$ and  $\psi|_{J}\neq 0$, hence by
 \cite[Proposition 3.8]{nas}
 $J$ is left and right  $\psi$-contractible.
So there exist $m_{1},m_{2}\in J$ such that $jm_{1}=\psi(j)m_{1}$
and $m_{2}j=\psi(j)m_{2}$ and also $\psi(m_{1})=\psi(m_{2})=1$ for
each $j\in J.$ Set $m=m_{1}m_{2}\in J.$ Clearly  we have
\begin{equation}\label{eq main}
jm=mj=\psi(j)m,\quad
\psi(m)=\psi(m_{1}m_{2})=\psi(m_{1})\psi(m_{2})=1,\qquad (j\in J).
\end{equation}
Suppose conversely that $|I|>1.$ Set $m$ for the matrix with n-th
columns $(x_{1},x_{2},...,x_{n})^{t}$, where $x_{i}\in A$ for each
$i\in\{1,2,...,n\}$. Let $a$ be an element of $J$ which its n-th
columns has the form $(0,0,...,a_{n})^{t}$ for an arbitrary element
$a_{n}\in A$. Applying (\ref{eq main}) we have
$$x_{1}a_{n}=x_{2}a_{n}=...=x_{n-1}a_{n}=0,\quad
\phi(a_{n})x_{1}=\phi(a_{n})x_{2}=...=\phi(a_{n})x_{n-1}=0,$$ and
also $$a_{n}x_{n}=x_{n}a_{n}=\phi(a_{n})x_{n},\quad \phi(x_{n})=1.$$
Pick an element $a_{n}\in A$ such that $\phi(a_{n})=1.$ Applying
(\ref{eq main}) follows that $x_{1}=x_{2}=...=x_{n-1}=0$. Then $m$
becomes a matrix which its n-th columns has the form
$(0,0,...,0,x_{n})^{t}$. Set $b$ for a matrix in $J$ which its n-th
columns has the form $(b_{1},b_{2},...,b_{n-1},b_{n})^{t}$, where
$b_{n}\in \ker\phi$ and
$\phi(b_{1})=\phi(b_{2})=...=\phi(b_{n-1})=1.$ Applying (\ref{eq
main}) we have $a_{1}x_{n}=0$. Taking $\phi$ on this equation we
have $0=\phi(a_{1}x_{n})=\phi(a_{1})\phi(x_{n})=1$ which is a
contradiction. Therefore $I$ must be singleton. So $A$ is
pseudo-contractible.

 Converse is clear.
\end{proof}
Suppose that $A$ is a Banach algebra and $\phi\in\Delta(A)$. $A$ is
called (approximately) left $\phi$-amenable if there exists (a not
necessarily) bounded net $(m_{\alpha})$ in $A$ such that
$$am_{\alpha}-\phi(a)m_{\alpha}\rightarrow 0\quad \phi(m_{\alpha})\rightarrow1\qquad (a\in A),$$
respectively. Right cases define similarly. For more information
about these new concepts of amenability and its related homological
notions  see \cite{agha}, \cite{kan}, \cite{Hu} and \cite{sah1}.

\begin{Theorem}\label{main1}
Let $I$ be an  ordered set with an smallest element. Also let $A$ be
a  Banach algebra with a left unit such that $\Delta(A)\neq
\emptyset.$  $UP(I,A)$ is  pseudo-amenable (approximate amenable) if
and only if $I$ is singleton and $A$ is pseudo-amenable(approximate
amenable), respectively.
\end{Theorem}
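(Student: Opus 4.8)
The plan is to mimic the structure of the proof of Theorem~\ref{main}, replacing ``pseudo-contractibility'' by ``pseudo-amenability'' (or approximate amenability) throughout and exploiting the corresponding ``approximate'' central/character versions of the tools used there. First I would establish the forcing ``$I$ is singleton'' direction. Pseudo-amenability of $UP(I,A)$ gives a (not necessarily bounded) approximate diagonal, hence in particular an approximate identity $(e_\alpha)$ for $UP(I,A)$ which is \emph{central up to a limit}, i.e.\ $F_{i,j}e_\alpha-e_\alpha F_{i,j}\to 0$ for the matrix units $F_{i,j}$ built from the left unit of $A$ (here one must be slightly careful: $A$ only has a left unit, so $F_{i,j}$ behaves as a left identity on rows, which is exactly what one needs to read off entries). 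Comparing diagonal entries as in Theorem~\ref{main} shows the diagonal entries of $e_\alpha$ become asymptotically equal, and since $e_\alpha\in UP(I,A)$ forces finite $\ell^1$-norm in each step, an infinite $I$ would force the common diagonal value to tend to $0$; but then $e_\alpha F_{i,i}\to 0$ while simultaneously $e_\alpha F_{i,i}\to F_{i,i}$, a contradiction. Hence $I$ is finite, say $I=\{i_1<\dots<i_n\}$, and we must still rule out $n>1$.

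For the finite case I would proceed exactly along the lines of Theorem~\ref{main} but with the character $\psi\in\Delta(UP(I,A))$, $\psi((a_{i,j}))=\phi(a_{i_n,i_n})$, and the same closed ideal $J=\{(a_{i,j})\in UP(I,A): a_{i,j}=0 \text{ for } j\ne i_n\}$ on which $\psi$ is nonzero. The key substitution is: pseudo-amenability of $UP(I,A)$ implies it is left and right $\psi$-\emph{amenable} (by the appropriate analogue of \cite[Theorem~1.1]{alagh1} for the approximate/pseudo setting), and passing to the ideal $J$ via the analogue of \cite[Proposition~3.8]{nas} gives that $J$ is left and right approximately $\psi$-amenable. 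So there are nets $(m_\alpha),(n_\alpha)$ in $J$ with $jm_\alpha-\psi(j)m_\alpha\to0$, $n_\alpha j-\psi(j)n_\alpha\to0$ and $\psi(m_\alpha)\to1$, $\psi(n_\alpha)\to1$ for all $j\in J$. I would then run the column computation from Theorem~\ref{main} in the limit: writing $m_\alpha$ via its $n$-th column $(x_1^\alpha,\dots,x_n^\alpha)^t$, testing against the element of $J$ with $n$-th column $(0,\dots,0,a_n)^t$ where $\phi(a_n)=1$ yields $x_k^\alpha\to0$ for $k<n$; then testing against $b\in J$ with $n$-th column $(b_1,\dots,b_{n-1},b_n)^t$, $\phi(b_n)=0$, $\phi(b_k)=1$, gives $b_1 x_n^\alpha\to 0$, whence applying $\phi$ gives $\phi(b_1)\phi(x_n^\alpha)\to0$, i.e.\ $1\cdot 1=0$ in the limit (since $\psi(m_\alpha)=\phi(x_n^\alpha)\to1$), a contradiction. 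The approximate-amenable case is handled by the same argument, using that approximate amenability also descends to closed ideals with $\psi|_J\ne0$ in the relevant sense and implies approximate left/right $\psi$-amenability.

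The main obstacle I anticipate is \textbf{bookkeeping the limits rather than exact equalities}: in Theorem~\ref{main} the relations $jm=\psi(j)m$ are exact, so one extracts clean identities like $x_1=\dots=x_{n-1}=0$; in the pseudo/approximate setting every such identity becomes a convergence statement, and one must make sure the finitely many simultaneous limit relations can be combined (e.g.\ applying the bounded functional $\phi$ to a convergent net, using that the $x_k^\alpha$ for $k<n$ genuinely tend to $0$) to reach the numerical contradiction $1=0$. A secondary technical point is the precise references for ``pseudo-amenable $\Rightarrow$ left/right $\psi$-amenable'' and ``approximately $\psi$-amenable ideals'' with only a \emph{left} unit on $A$; I would either cite the approximate analogues of \cite{alagh1} and \cite[Proposition~3.8]{nas} or, if those are not literally available, reprove the one-line implications directly, since the one-sided unit is exactly enough to make $J$ a unital-on-one-side ideal on which the character restricts nontrivially. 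The converse (``$I$ singleton and $A$ pseudo-amenable/approximately amenable $\Rightarrow UP(I,A)$ has the property'') is immediate since then $UP(I,A)\cong A$.
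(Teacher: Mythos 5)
There is a genuine gap in your reduction to finite $I$, and it is essential to your plan because everything afterwards (the character $\psi(a)=\phi(a_{i_n,i_n})$ and the last-column ideal $J$) is anchored at a \emph{largest} element, which the hypotheses do not provide when $I$ is infinite. In Theorem \ref{main} the finiteness argument works because pseudo-contractibility gives an approximate identity that is \emph{exactly} central, so for each fixed $\alpha$ all diagonal entries of $e_\alpha$ coincide, and summability over an infinite $I$ forces them to vanish at that fixed $\alpha$. Under pseudo-amenability you only get $F_{i,j}e_\alpha-e_\alpha F_{i,j}\to 0$ for each \emph{fixed} pair $(i,j)$, with no uniformity in $(i,j)$ and no bound on $(e_\alpha)$; at a fixed $\alpha$ the diagonal entries need not be (even approximately) equal across all of $I$, and nothing forces a fixed entry $(e_\alpha)_{i,i}$ to tend to $0$. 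Concretely, a net whose diagonal equals $e_A$ on larger and larger finite subsets of $I$ and is $0$ elsewhere satisfies every asymptotic relation you invoke, has finite $\ell^1$-norm at each index, and produces no contradiction; so the inference ``asymptotically equal diagonal entries $+$ finite norm per step $\Rightarrow$ common diagonal value tends to $0$'' is invalid, and your finiteness step collapses.

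The paper avoids this issue entirely, and you could repair your argument the same way: work with the \emph{smallest} element $i_0$ guaranteed by the hypothesis, set $\psi(a)=\phi(a_{i_0,i_0})$, and push the approximate diagonal $(m_\alpha)$ through the map $T(a\otimes b)=\psi(a)b$, which satisfies $T(a\cdot x)=\psi(a)T(x)$, $T(x\cdot a)=T(x)a$ and $\psi\circ T=\psi\circ\pi_{UP(I,A)}$; this makes $UP(I,A)$ approximately right $\psi$-amenable with no boundedness or finiteness needed. Then the closed ideal of matrices supported on row $i_0$ (on which $\psi$ is non-zero, and to which the approximate right $\psi$-amenability passes by multiplying by a fixed element of the ideal with $\psi$-value $1$) yields exactly your limit computation: if $|I|>1$, test against $b$ in that ideal with $b_{i_0,i_0}\in\ker\phi$ and $\phi(b_{i_0,j})=1$ for some $j\neq i_0$, apply $\phi$, and you get $\phi\bigl((n_\alpha)_{i_0,i_0}\bigr)\to 0$ against $\psi(n_\alpha)\to 1$, a contradiction valid for arbitrary $I$, finite or infinite. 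Your finite-case column computation with limits and your converse are fine as written; it is only the route to ``$I$ finite'' (and the reliance on a largest element) that must be replaced.
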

\begin{proof}
Here we proof the pseudo-amenable case, approximate amenability is
similar. Suppose that $UP(I,A)$ is pseudo-amenable. Then there
exists a net $(m_{\alpha})$ in $UP(I,A)\otimes_{p}UP(I,A)$ such that
$$a\cdot m_{\alpha}-m_{\alpha}\cdot a\rightarrow 0,\quad \pi_{UP(I,A)}(m_{\alpha})a\rightarrow a\qquad(a\in
UP(I,A)).$$ Let $i_{0}$ be a smallest element of $I$. It is easy to
see that $\psi$ given by $\psi(a)=\phi(a_{i_{0},i_{0}})$ is a
character on $UP(I,A),$ for each $a=(a_{i,j})\in UP(I,A)$. Define
$$T:UP(I,A)\otimes_{p}UP(I,A)\rightarrow UP(I,A)$$ by $T(a\otimes
b)=\psi(a)b$ for each $a,b\in UP(I,A)$. It is easy to see that $T$
is a bounded linear map which satisfies the following:
$$T(a\cdot x)=\psi(a)T(x),\quad T(x\cdot a)=T(x)a,\quad \psi\circ
T(x)=\psi\circ \pi_{UP(I,A)}(x),$$ for each $a,b\in UP(I,A)$ and $
x\in UP(I,A)\otimes_{p}UP(I,A)$. Thus we have
$$\psi(a)T(m_{\alpha})-T(m_{\alpha})a=T(a\cdot m_{\alpha}-m_{\alpha}\cdot a)\rightarrow 0$$
and $\psi\circ T(m_{\alpha})=\psi\circ
\pi_{UP(I,A)}(m_{\alpha})\rightarrow 1$. Hence $UP(I,A)$ is
approximately right $\psi$-amenable. Using the same arguments as in
the proof of Theorem \ref{main} and applying \cite[Proposition
5.1]{saha} one can see that $I$ is singleton and $A$ is
pseudo-amenable.

Converse is  clear.
\end{proof}

Let $A$ be a Banach algebra and $a\in A.$ By $a\varepsilon_{i,j}$ we
mean a matrix belongs to $UP(I,A)$ with $(i,j)$-th place is $a$ and
zero elsewhere.
\begin{Theorem}
Let $I$ be non-empty set and  $A$ be a  Banach algebra  such that
$\Delta(A)\neq \emptyset.$  $UP(I,A)$ is amenable if and only if $I$
is singleton and $A$ amenable.
\end{Theorem}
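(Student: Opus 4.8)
The plan is to mimic the structure of the proofs of Theorems \ref{main} and \ref{main1}, replacing pseudo-contractibility/pseudo-amenability by amenability throughout. The non-trivial direction is the forward one: assuming $UP(I,A)$ is amenable, we must show $I$ is a singleton and $A$ is amenable. Since amenability passes to quotients and, crucially, since an amenable Banach algebra has a \emph{bounded} approximate identity, the same trick with the matrix units $F_{i,j}$ (or rather $e_A\varepsilon_{i,j}$ when $A$ is unital; in the general case one first observes $A$ must be unital, or works with an approximate identity of $A$) forces the diagonal entries of a bounded approximate identity $(e_\alpha)$ of $UP(I,A)$ to agree. If $I$ were infinite, equality of the diagonal entries together with the finiteness of the norm $\sum_{i,j}\|a_{i,j}\|$ forces those diagonal entries to be zero, and then $e_\alpha (e_A\varepsilon_{i,i}) = 0 \not\to e_A\varepsilon_{i,i}$, a contradiction. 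Hence $I$ is finite, say $I = \{i_1 < i_2 < \dots < i_n\}$.

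Next I would rule out $n > 1$. Pick $\phi \in \Delta(A)$ and define $\psi \in \Delta(UP(I,A))$ by $\psi((a_{i,j})) = \phi(a_{i_n,i_n})$, reading off the \emph{last} diagonal entry. As in Theorem \ref{main}, set $J = \{(a_{i,j}) \in UP(I,A) : a_{i,j} = 0 \text{ unless } j = i_n\}$, the closed right ideal consisting of matrices supported on the last column; this is in fact a closed ideal. Since $UP(I,A)$ is amenable, so is the quotient; more directly, amenability of $UP(I,A)$ gives amenability of $J$ as a closed ideal with bounded approximate identity (here one invokes that a closed ideal of an amenable algebra which has a bounded approximate identity is amenable, e.g.\ via \cite{Joh}, and $J$ does have one since $UP(I,A)$ is unital and $J$ is complemented). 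Amenability of $J$ implies $J$ is left and right $\psi|_J$-amenable, hence by the structure results of \cite{nas} (Proposition 3.8, as used above) one extracts the defining elements. Then the column computation from the proof of Theorem \ref{main} applies verbatim: testing against a matrix with last column $(0,\dots,0,a_n)^t$ with $\phi(a_n)=1$ kills the first $n-1$ entries of the relevant element, and then testing against a matrix with last column $(b_1,\dots,b_{n-1},b_n)^t$ with $b_n \in \ker\phi$ and $\phi(b_i) = 1$ produces the contradiction $1 = \phi(a_1)\phi(x_n) = 0$. Therefore $n = 1$, i.e.\ $I$ is a singleton, whence $UP(I,A) \cong A$ and $A$ is amenable.

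The converse is immediate: if $I = \{*\}$ then $UP(I,A)$ is isometrically isomorphic to $A$, so amenability of $A$ gives amenability of $UP(I,A)$.

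The main obstacle, and the step deserving the most care, is the reduction to an amenability statement about the ideal $J$ in the multi-element case: one must be sure the hereditary properties being invoked (amenability descending to $J$, and $\psi$-amenability of $J$ from amenability) are correctly sourced, since amenability is not in general inherited by arbitrary closed ideals — it is inherited by closed ideals possessing a bounded approximate identity, which is exactly why the hypothesis $\Delta(A) \neq \emptyset$ together with the complementation of $J$ in the finite-dimensional-indexing case is needed. Everything else is a direct transcription of the bounded-approximate-identity argument of Theorem \ref{main} and the column-chasing of its second half. I would also remark that, once $I$ is known to be a singleton, no unitality assumption on $A$ is actually required in the statement, consistent with the theorem as written.
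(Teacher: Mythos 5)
Your overall skeleton (finiteness of $I$, then a character/column argument to force $|I|=1$, trivial converse) matches the paper, but two of your key steps have genuine gaps. First, the claim that the matrix-unit trick ``forces the diagonal entries of a bounded approximate identity of $UP(I,A)$ to agree'' is not available here: in Theorem \ref{main} that equality came from $F_{i,j}e_{\alpha}=e_{\alpha}F_{i,j}$, i.e.\ from the approximate identity being \emph{central}, which is what pseudo-contractibility provides. Amenability only gives a \emph{bounded} approximate identity $(E^{\alpha})$, with no centrality, so your route to ``diagonal entries equal, hence zero, hence contradiction'' does not start. (Also $A$ is not assumed unital in this theorem, so the matrix units $e_{A}\varepsilon_{i,j}$ need not exist; ``one first observes $A$ must be unital'' is not something you can observe.) The paper's actual finiteness argument is a norm-counting one: from $E^{\alpha}\,a\varepsilon_{k,l}\to a\varepsilon_{k,l}$ it extracts, for each fixed index, a diagonal-type entry acting as a left approximate identity of $A$, hence eventually of norm $>\tfrac12$; infinitely many indices would then make $\norm{E^{\alpha}}=\sum_{i,j}\norm{E^{\alpha}_{i,j}}$ exceed the bound $M$, a contradiction.

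Second, your reduction to the last-column ideal $J$ is not sound. You justify a bounded approximate identity for $J$ by ``$UP(I,A)$ is unital and $J$ is complemented'': unitality is not assumed, complementation of a closed subspace does not produce an approximate identity, and in fact for $n\geq 2$ the ideal $J$ has \emph{no} left approximate identity at all (for $u,x\in J$ one has $(ux)_{i,i_{n}}=u_{i,i_{n}}x_{i_{n},i_{n}}$, which cannot approximate $x_{i,i_{n}}$ when $x_{i_{n},i_{n}}=0$). So the hereditary theorem ``a closed ideal with a bounded approximate identity in an amenable algebra is amenable'' cannot be invoked. Moreover, even granting amenability of $J$, you would obtain left and right $\psi$-\emph{amenability}, which yields approximately invariant nets (or invariant elements in the bidual), not the exact elements $m_{1},m_{2}\in J$ with $jm_{1}=\psi(j)m_{1}$, $m_{2}j=\psi(j)m_{2}$ that the column chase of (\ref{eq main}) uses; those exact elements come from $\psi$-\emph{contractibility} via \cite[Proposition 3.8]{nas}, and your appeal to that proposition here is a mis-citation. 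The paper avoids all of this by arguing as in Theorem \ref{main1}: the map $T(a\otimes b)=\psi(a)b$ gives right $\psi$-amenability of $UP(I,A)$ itself, and the singleton conclusion is then drawn from \cite[Proposition 5.1]{saha} together with the arguments of Theorem \ref{main}, rather than from amenability of the ideal $J$.
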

\begin{proof}
Let $UP(I,A)$ be amenable. Then $UP(I,A)$  has a bounded approximate
identity, say $(E^{\alpha})$. Let $M>0$ be a bound for
$(E^{\alpha})$. We claim that $A$ has a bounded left approximate
identity. To see this, fix $k,l\in I.$ Then for each $a\in A$, we
have
\begin{equation}
\begin{split}
0=\lim_{\alpha}||E^{\alpha}a\varepsilon_{k,l}-a\varepsilon_{k,l}||&=\lim_{\alpha}||(\sum_{i,j}
E^{\alpha}_{i,j}\varepsilon_{i,j})a\varepsilon_{k,l}-a\varepsilon_{k,l}||\\
&=\lim_{\alpha}||\sum_{i}
E^{\alpha}_{i,l}a\varepsilon_{i,l}-a\varepsilon_{k,l}||\\
&=\lim_{\alpha}(||\sum_{i\neq k}
E^{\alpha}_{i,l}a||+||E^{\alpha}_{k,l}a-a||.
\end{split}
\end{equation}
Thus $e_{\alpha}=E^{\alpha}_{k,l}$ is a left approximate identity of
$A.$ It is easy to see that $||e_{\alpha}||\leq ||E^{\alpha}||\leq
M$. So $(e_{\alpha})$ is a bounded left approximate identity for
$A.$ We claim that $I$ is finite. Suppose conversely that $I$ is
infinite.  Pick $a\in A$ such that $||a||=1.$ Since $(e_{\alpha})$
is a bounded left approximate identity for $A$, then
$\lim_{\alpha}e_{\alpha}a=a$, for each $a\in A.$ Thus there exists a
$\alpha_{l,k}$ such that $\alpha\geq \alpha_{k,l}$ such that
$\frac{1}{2}<||e_{\alpha}a||$. Hence for $\alpha\geq \alpha_{k,l}$
we have
\begin{equation}\label{eq}
\frac{1}{2}<||e_{\alpha}a||\leq ||e_{\alpha}||=||E_{k,l}^{\alpha}||.
\end{equation}
 Since $I$ is infinite we can choose $N\in
\mathbb{N}$ such that $N>2M.$ Then choose distinct $k_{1}, l_{1},
k_{2},l_{2},...,k_{N}, l_{N}$ in $I$ and $\alpha\geq
\alpha_{k_{i},l_{i}},$ $i=1,2,...,N$. Using (\ref{eq}) one can see
that
$$M<\frac{1}{2}N= \sum^{N}_{i=1}||E_{k_i,l_{i}}^{\alpha}||\leq \sum_{i,j\in I}||E_{i,j}^{\alpha}||\leq M,$$
which is a contradiction. So $I$ is finite.

Applying the same method as in the proof of previous Theorem, it is
easy to see
 that $I$ must be singleton, then $A$ is amenable.
\end{proof}
\section{a class of Matrix algebra and approximate biprojectivity}

In this section we study approximate biprojectivity of some matrix
algebra. We also investigate the relation of approximate
biprojectivity and discreteness of maximal ideal space of a Banach
algebra.
\begin{Theorem}
Let $I$ be an  ordered set with an smallest element. Also let $A$ be
a  Banach algebra with a right identity such that $\Delta(A)\neq
\emptyset.$ $UP(I,A)$ is  approximately biprojective if and only if
$I$ is singleton and $A$ is approximately biprojective.
\end{Theorem}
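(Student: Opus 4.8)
The plan is to mirror the structure of the proofs of Theorems \ref{main} and \ref{main1}, the only genuinely new input being a suitable ``hereditary'' property of approximate biprojectivity along closed ideals together with a description of approximately biprojective algebras that possess a character. First I would recall (or quickly derive) that if a Banach algebra $B$ is approximately biprojective and $\psi\in\Delta(B)$, then $B$ is approximately left and right $\psi$-amenable; indeed, composing the bimodule morphisms $\rho_\alpha:B\to B\otimes_p B$ with the map $b\otimes c\mapsto \psi(b)c$ (respectively $b\otimes c\mapsto b\psi(c)$) produces the required nets. This is the analogue of the step in Theorem \ref{main1} where the map $T$ was used, and it lets me transport approximate biprojectivity of $UP(I,A)$ into an approximate $\psi$-amenability statement.

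Next I would set up the characters exactly as before: let $i_0$ be the smallest element of $I$ and $\phi\in\Delta(A)$, and define $\psi\in\Delta(UP(I,A))$ by $\psi((a_{i,j}))=\phi(a_{i_0,i_0})$. From the first step, $UP(I,A)$ is approximately left (or right) $\psi$-amenable. I would then introduce the closed ideal
$$J=\set{(a_{i,j})\in UP(I,A)\mid a_{i,j}=0 \text{ whenever } j\neq i_0},$$
observe $\psi|_J\neq 0$, and invoke the hereditary result for approximate $\psi$-amenability along ideals on which the character is non-zero (the approximate analogue of \cite[Proposition 3.8]{nas}, as already used via \cite[Proposition 5.1]{saha} in Theorem \ref{main1}) to conclude that $J$ is approximately left (right) $\psi$-amenable. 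Now I run essentially the computation from the proof of Theorem \ref{main}: using a right identity of $A$ and elements of $J$ built from matrices whose $i_0$-th column has prescribed entries, the approximate $\psi$-amenability identities $a m_\alpha-\psi(a)m_\alpha\to 0$, $\psi(m_\alpha)\to 1$ force, in the limit, the off-diagonal components to vanish and then produce the same contradiction ($0=\lim\phi(\,\cdot\,)=1$) as soon as $|I|>1$. Hence $I$ is a singleton. With $I=\{i_0\}$ one has $UP(I,A)\cong A$ as Banach algebras, so $A$ is approximately biprojective; the converse is immediate since $UP(\{i_0\},A)\cong A$.

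I expect the main obstacle to be precisely the first paragraph: pinning down and correctly citing (or proving in a line or two) the implication ``approximately biprojective with a character $\Rightarrow$ approximately left/right $\phi$-amenable'', since approximate biprojectivity only gives $\pi_B\circ\rho_\alpha(b)\to b$ rather than an exact right inverse, so one must check that $\psi\circ\rho_\alpha(b)\to \psi(b)$ (which follows from $\psi\circ\pi_B=\psi\otimes\psi$ composed appropriately) and that the net $\psi\bigl((\mathrm{id}\otimes\psi)\rho_\alpha(b_0)\bigr)$ can be normalized to tend to $1$ for a fixed $b_0$ with $\psi(b_0)=1$. A secondary, more bookkeeping-level obstacle is making sure the ``same arguments as in Theorem \ref{main}'' genuinely go through with a one-sided (right) identity on $A$ and with limits rather than exact equalities — in particular that the finitely many linear relations among the column entries $x_1,\dots,x_n$ still collapse in the limit; but this is routine once the character-amenability reduction is in place, so I would state it briefly and refer back to the computation in the proof of Theorem \ref{main}.
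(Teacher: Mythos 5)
There is a genuine gap, and it sits exactly where you predicted the ``main obstacle'' would be: your first step is not merely delicate, it is false as stated. Approximate biprojectivity together with a character does \emph{not} imply approximate left (hence not left-and-right) $\psi$-amenability in the absence of some identity-type hypothesis. The paper's own Remark supplies a counterexample: the biprojective algebra $A=\left(\begin{smallmatrix}0&\mathbb{C}\\ 0&\mathbb{C}\end{smallmatrix}\right)$ with $\phi\left(\begin{smallmatrix}0&a\\ 0&b\end{smallmatrix}\right)=b$ admits no net $(m_\alpha)$ with $am_\alpha-\phi(a)m_\alpha\to 0$ and $\phi(m_\alpha)\to 1$; writing $m_\alpha=\left(\begin{smallmatrix}0&x_\alpha\\ 0&y_\alpha\end{smallmatrix}\right)$ and testing against $a=\left(\begin{smallmatrix}0&1\\ 0&0\end{smallmatrix}\right)$ (so $\phi(a)=0$) forces $y_\alpha\to 0$, contradicting $\phi(m_\alpha)=y_\alpha\to 1$. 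The reason your transplant of the map $T$ from Theorem \ref{main1} breaks down is structural: there $T$ is applied to a net of \emph{elements} $m_\alpha$ satisfying $a\cdot m_\alpha-m_\alpha\cdot a\to 0$, whereas approximate biprojectivity only hands you bimodule \emph{morphisms} $\rho_\alpha$, so the best you can do is apply $T$ to $\rho_\alpha(b_0)$ for a fixed $b_0$ with $\psi(b_0)=1$; then $T\bigl(a\cdot\rho_\alpha(b_0)-\rho_\alpha(b_0)\cdot a\bigr)=T\bigl(\rho_\alpha(ab_0-b_0a)\bigr)$, and $ab_0-b_0a$ is a fixed, generally nonzero element, so there is no reason for this to tend to $0$. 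Consequently your reduction to approximate $\psi$-amenability of $UP(I,A)$, and then of the ideal $J$, never gets off the ground.

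This is precisely why the theorem carries the hypothesis that $A$ has a right identity, and why the paper routes the argument differently: the right identity of $A$ yields a right approximate identity for $UP(I,A)$ (\cite[Lemma 5.2]{saha}), and then \cite[Theorem 3.9]{sah3} — whose hypotheses include exactly such an approximate identity — upgrades approximate biprojectivity to \emph{right $\psi$-contractibility}, an exact statement producing a genuine element $m$ with $ma=\psi(a)m$ and $\psi(m)=1$, after which the column computation of Theorem \ref{main} applies. In your proposal the right identity only enters in the later matrix computation, i.e.\ not where it is indispensable; to repair the argument you must feed it into the character-reduction step, either by citing \cite[Theorem 3.9]{sah3} as the paper does, or by proving that implication yourself — which genuinely uses the (right) approximate identity and is not a one- or two-line normalization of $\psi\circ\rho_\alpha$.
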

\begin{proof}
Let $i_{0}$ be smallest element of $I$. Define
$\psi\in\Delta(UP(I,A))$ by $\psi(a)=\phi(a_{i_{0},i_{0}})$, where
$a=(a_{i,j})\in UP(I,A)$. Suppose that $UP(I,A)$ is  approximately
biprojective. Since $A$ has a right identity, by \cite[Lemma
5.2]{saha}, $UP(I,A)$ has a right approximate identity. Applying
\cite[Theorem 3.9]{sah3}, $UP(I,A)$ is right $\psi$-contractible.
Using the same arguments as in the proof of the  Theorem \ref{main},
$I$ is singleton and $A$ is approximately biprojective.

Converse is clear.
\end{proof}
\begin{Remark}
Let $A$ be a Banach algebra with a left approximate identity and $I$
be a finite set which has at least two elements. Then  $UP(I,A)$  is
never approximately biprojective. To see this, since
$I=\{i_{1},i_{2},...,i_{n}\}$ is finite then left approximate
identity of $A$ gives a left approximate identity for  $UP(I,A)$.
Define
 $\psi\in\Delta(UP(I,A))$ by $\psi(a)=\phi(a_{i_{n},i_{n}})$ for every
 $a=(a_{i,j})\in UP(I,A)$.  By  \cite[Theorem
3.9]{sah3} approximate biprojectivity of
 $UP(I,A)$ implies that  $UP(I,A)$ is left $\psi$-contractible, then
 the rest is similar to the proof of Theorem \ref{main}.
\end{Remark}
\begin{Proposition}
Let $A$ be a Banach algebra with a left approximate identity and
$\Delta(A)$ be a non-empty set. If $A$ is approximately
biprojective, then $\Delta(A)$ is discrete with respect to the
$w^{*}$-topology.
\end{Proposition}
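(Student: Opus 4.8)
The plan is, for a fixed $\phi\in\Delta(A)$, to produce a single element $u\in A$ with $\phi(u)\neq 0$ and $\psi(u)=0$ for every $\psi\in\Delta(A)\setminus\{\phi\}$. Such a $u$ immediately exhibits $\{\phi\}=\{\psi\in\Delta(A):|\psi(u)-\phi(u)|<|\phi(u)|\}$ as a $w^{*}$-open subset of $\Delta(A)$, and since $\phi$ is arbitrary this proves that $\Delta(A)$ is discrete.

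To build $u$ I would work directly with a net $(\rho_{\alpha})$ of $A$-bimodule morphisms $\rho_{\alpha}\colon A\to A\otimes_{p}A$ witnessing approximate biprojectivity, i.e. $\pi_{A}\circ\rho_{\alpha}(a)\to a$ for $a\in A$. Set $d_{\alpha}=(\phi\otimes\mathrm{id}_{A})\circ\rho_{\alpha}\colon A\to A$, where $(\phi\otimes\mathrm{id}_{A})(b\otimes c)=\phi(b)c$ extends to a bounded operator on $A\otimes_{p}A$. Using $a\cdot(b\otimes c)=ab\otimes c$, $(b\otimes c)\cdot a=b\otimes ca$ together with the bimodule-morphism property of $\rho_{\alpha}$, one checks the two identities $d_{\alpha}(ab)=\phi(a)d_{\alpha}(b)$ and $d_{\alpha}(ab)=d_{\alpha}(a)b$ $(a,b\in A)$, as well as $\phi\circ d_{\alpha}=\phi\circ\pi_{A}\circ\rho_{\alpha}\to\phi$ pointwise on $A$. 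Fixing $a_{0}$ with $\phi(a_{0})=1$ and choosing $\alpha$ with $\phi(d_{\alpha}(a_{0}))\neq 0$, put $u=d_{\alpha}(a_{0})$. For $\psi\in\Delta(A)$, equating $\psi$ applied to the two expressions for $d_{\alpha}(ab)$ (with $a=b$) gives $\psi(d_{\alpha}(b))\bigl(\phi(b)-\psi(b)\bigr)=0$ for all $b$, so if $\psi\neq\phi$ the linear functional $\psi\circ d_{\alpha}$ vanishes off the hyperplane $\ker(\phi-\psi)$ and hence vanishes identically; in particular $\psi(u)=0$, while $\phi(u)\neq 0$. This yields the required $u$. (This route does not even use the left approximate identity; alternatively one may invoke \cite[Theorem 3.9]{sah3} to get that $A$ is left $\phi$-contractible for each $\phi\in\Delta(A)$, pick $m\in A$ with $am=\phi(a)m$ and $\phi(m)=1$, note that applying $\psi$ forces $\psi(m)=0$ for $\psi\neq\phi$, and run the same separation argument with $u=m$.)

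The computation is short and I do not anticipate a serious obstacle. The only points requiring a little care are the routine verification that $\phi\otimes\mathrm{id}_{A}$ (respectively multiplication) intertwines the module actions exactly as stated, and the key observation that for $\psi\neq\phi$ one obtains the \emph{exact} vanishing $\psi(u)=0$ rather than merely $\psi(u)\to 0$ along the net; it is precisely this exactness that upgrades an algebraic identity into a genuine $w^{*}$-open neighbourhood of $\phi$.
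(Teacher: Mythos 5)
Your argument is correct, but it is not the route the paper takes. The paper's proof is a two-citation argument: approximate biprojectivity together with the left approximate identity gives, via \cite[Theorem 3.9]{sah3}, that $A$ is left $\phi$-contractible for every $\phi\in\Delta(A)$, and then \cite[Corollary 2.2]{dashti} yields discreteness of $\Delta(A)$; your parenthetical fallback (pick $m$ with $am=\phi(a)m$, $\phi(m)=1$, note $\psi(m)=0$ for $\psi\neq\phi$, and separate) is essentially this route, with the content of the Dashti--Nasr-Isfahani--Soltani Renani corollary written out. Your primary argument, by contrast, works directly with the witnessing net $\rho_{\alpha}$: the identities $d_{\alpha}(ab)=\phi(a)d_{\alpha}(b)=d_{\alpha}(a)b$ for $d_{\alpha}=(\phi\otimes\mathrm{id}_{A})\circ\rho_{\alpha}$ do hold, the relation $\psi(d_{\alpha}(b))\bigl(\phi(b)-\psi(b)\bigr)=0$ forces the purely algebraic conclusion $\psi\circ d_{\alpha}\equiv 0$ for $\psi\neq\phi$ (a linear functional vanishing off a proper hyperplane vanishes identically), and $\phi\circ d_{\alpha}=\phi\circ\pi_{A}\circ\rho_{\alpha}\to\phi$ pointwise supplies an index with $\phi(u)\neq 0$ for $u=d_{\alpha}(a_{0})$; the set $\{\psi:|\psi(u)-\phi(u)|<|\phi(u)|\}$ is then exactly $\{\phi\}$ and is relatively $w^{*}$-open. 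What this buys is genuine: your proof is self-contained (no appeal to \cite{sah3} or \cite{dashti}) and, as you note, it never uses the left approximate identity, so it actually establishes the stronger statement that approximate biprojectivity alone forces $\Delta(A)$ to be $w^{*}$-discrete, whereas the paper needs the approximate identity hypothesis precisely to invoke \cite[Theorem 3.9]{sah3} and produce the exact elements $m$. The one point you flag --- that the vanishing $\psi(u)=0$ is exact rather than approximate --- is indeed the crux, and your hyperplane argument handles it correctly.
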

\begin{proof}
Since $A$ is an approximately biprojective Banach algebra with a
left approximate identity, by \cite[Theorem 3.9]{sah3} $A$ is left
$\phi$-contractible for every $\phi\in\Delta(A)$.   Applying
\cite[Corollary 2.2]{dashti} one can see that $\Delta(A)$ is
discrete.
\end{proof}
\begin{cor}
Let $A$ be a Banach algebra with a left identity, $\phi\in\Delta(A)$
and let $I$ be a non-empty set. If $UP(I,A)$  is approximate
biprojective, then $\Delta(UP(I,A))$ is discrete  with respect to
the $w^{*}$-topology.
\end{cor}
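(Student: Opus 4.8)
The plan is to reduce the statement to the preceding Proposition, applied to the Banach algebra $B:=UP(I,A)$ in place of $A$. For this I must verify the two standing hypotheses of that Proposition for $B$, namely that $\Delta(B)$ is non-empty and that $B$ has a left approximate identity; approximate biprojectivity of $B$ is exactly the hypothesis of the Corollary.

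First I would check that $\Delta(UP(I,A))\neq\emptyset$. Fixing any element $i_{0}$ of the (ordered) index set $I$, the identity $(ab)_{i_{0},i_{0}}=a_{i_{0},i_{0}}\,b_{i_{0},i_{0}}$, valid for upper triangular matrices, shows that $\psi(a):=\phi(a_{i_{0},i_{0}})$ defines a non-zero multiplicative linear functional on $UP(I,A)$; hence $\psi\in\Delta(UP(I,A))$. This is the same kind of character already used repeatedly in Section~2.

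Next I would produce a left approximate identity for $UP(I,A)$ out of a left identity $e$ of $A$. For a finite subset $F\subseteq I$ set $E_{F}:=\sum_{i\in F}e\varepsilon_{i,i}\in UP(I,A)$, and direct these elements by inclusion of the sets $F$. A short computation, using that $e$ is a left identity of $A$, gives $(E_{F}a)_{i,j}=a_{i,j}$ when $i\in F$ and $(E_{F}a)_{i,j}=0$ otherwise, so that $\|E_{F}a-a\|=\sum_{i\notin F}\sum_{j\in I}\|a_{i,j}\|$ for every $a=(a_{i,j})\in UP(I,A)$. Since $\sum_{i,j\in I}\|a_{i,j}\|<\infty$, the right-hand side tends to $0$ along the net of finite subsets; thus $(E_{F})$ is a (not necessarily bounded) left approximate identity for $UP(I,A)$.

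With these two observations in hand, the Proposition applied to $UP(I,A)$ yields immediately that $\Delta(UP(I,A))$ is discrete in the $w^{*}$-topology, which is the assertion. I do not anticipate a genuine obstacle: the only points needing verification are the routine identity $(ab)_{i_{0},i_{0}}=a_{i_{0},i_{0}}b_{i_{0},i_{0}}$ giving the character, and the equally routine check that $(E_{F})$ is a left approximate identity, the latter relying only on the summability built into the norm on $UP(I,A)$. Alternatively, one could bypass the Proposition and argue directly via \cite[Theorem 3.9]{sah3} (to obtain left $\psi$-contractibility of $UP(I,A)$ for every $\psi\in\Delta(UP(I,A))$) followed by \cite[Corollary 2.2]{dashti}.
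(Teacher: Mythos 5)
Your proposal is correct and follows essentially the same route as the paper: reduce to the preceding Proposition by noting that $\phi$ induces a character on $UP(I,A)$ and that a left identity of $A$ yields a left approximate identity of $UP(I,A)$. The only difference is that you verify the left approximate identity (via the net $E_{F}=\sum_{i\in F}e\varepsilon_{i,i}$) and the character $\psi(a)=\phi(a_{i_{0},i_{0}})$ directly, where the paper simply cites \cite[Lemma 5.2]{saha}; your computations are accurate, so this is just a self-contained version of the same argument.
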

\begin{proof}
Note that, since  $\phi\in\Delta(A)$, $\Delta(UP(I,A))$ is a
non-empty set. Existence of left identity for $A$ implies that
$UP(I,A)$ has a left approximate identity, see  \cite[Lemma
5.2]{saha}. Applying previous Proposition one can see that
$\Delta(UP(I,A))$ is discrete with respect to the $w^{*}$-topology.
\end{proof}
 Let $A$ be a Banach algebra and $\phi\in\Delta(A)$. $A$ is
$\phi$-inner amenable if there exists a bounded net $(a_{\alpha})$
in $A$ such that $$aa_{\alpha}-a_{\alpha}a\rightarrow 0,\quad
\phi(a_{\alpha})\rightarrow 1\qquad(a\in A).$$ For more information
about $\phi$-inner amenability, see \cite{jab}.
\begin{lemma}
Let $A$ be a Banach algebra and  $\phi\in\Delta(A).$ Suppose that
$A$ has an approximate identity. Then approximate biprojectivity of
$A$ implies that $A$ is $\phi$-inner amenable.
\end{lemma}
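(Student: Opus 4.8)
The plan is to derive $\phi$-inner amenability from approximate biprojectivity by first extracting, for each bimodule morphism $\rho_\alpha\colon A\to A\otimes_p A$ with $\pi_A\circ\rho_\alpha(a)\to a$, a candidate net in $A$ itself. The natural device is to compose $\rho_\alpha$ with a suitable functional on the second leg of the tensor product. Concretely, fix $\phi\in\Delta(A)$ and consider the map $\mathrm{id}\otimes\phi\colon A\otimes_p A\to A$ determined by $a\otimes b\mapsto \phi(b)a$; this is a bounded linear map. Set $F_\alpha:=(\mathrm{id}\otimes\phi)\circ\rho_\alpha\colon A\to A$. The bimodule morphism property of $\rho_\alpha$ translates into $F_\alpha(ab)=aF_\alpha(b)$ and $F_\alpha(ab)=\phi(b)F_\alpha(a)$ for $a,b\in A$, so $F_\alpha$ is a left $A$-module map that factors through $\phi$ on the right, i.e.\ $aF_\alpha(b)=\phi(b)F_\alpha(a)$.

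Next I would feed the approximate identity into these maps. Let $(e_j)$ be an approximate identity for $A$. The relation $aF_\alpha(b)=\phi(b)F_\alpha(a)$ with $b=e_j$ gives, in the limit over $j$, $aF_\alpha(e_j)\to F_\alpha(a)$ while $\phi(e_j)F_\alpha(a)\to F_\alpha(a)$ as well (using $\phi(e_j)\to 1$), so the two sides are consistent; more importantly, evaluating at $a=e_i$ and using that $\pi_A\circ\rho_\alpha\to \mathrm{id}$ should let me control $\phi(F_\alpha(e_i))$. Indeed $\phi(F_\alpha(a))=\phi\bigl((\mathrm{id}\otimes\phi)\rho_\alpha(a)\bigr)=(\phi\otimes\phi)\rho_\alpha(a)=\phi(\pi_A\rho_\alpha(a))$, and since $\pi_A\rho_\alpha(a)\to a$ we get $\phi(F_\alpha(a))\to\phi(a)$. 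Choosing $a$ along the approximate identity (or any $a$ with $\phi(a)=1$) gives a net with $\phi$-value tending to $1$.

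The candidate net for $\phi$-inner amenability will then be $a_\alpha:=F_\alpha(u_\alpha)$ for an appropriate choice of $u_\alpha$ from the approximate identity, or more cleanly a net obtained by a diagonal/iterated-limit argument over $(\alpha,j)$ from $F_\alpha(e_j)$. The commutator condition $a a_\alpha - a_\alpha a\to 0$ should follow because $aF_\alpha(e_j)=\phi(e_j)F_\alpha(a)$ on one side, while $F_\alpha(e_j)a = F_\alpha(e_j a)\to F_\alpha(a)$ on the other (using left-module property and continuity), and these two limits agree up to the factor $\phi(e_j)\to 1$; so $aa_\alpha - a_\alpha a\to \phi(a)\,(\text{something}) - (\text{same}) = 0$ after passing to the iterated limit. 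Simultaneously $\phi(a_\alpha)\to 1$ by the previous paragraph. Boundedness of the net is the one point that genuinely needs care: the $\rho_\alpha$ are not assumed uniformly bounded, so $(a_\alpha)$ need not be bounded a priori. The main obstacle is therefore to either (i) invoke an earlier structural result that upgrades the unbounded net to a bounded one — this is exactly the kind of step where one cites the characterization of approximate biprojectivity via left $\phi$-contractibility in the presence of an approximate identity (\cite[Theorem 3.9]{sah3}), from which a bounded $\phi$-inner-amenability net is immediate since left $\phi$-contractibility supplies a single element $m$ with $am=\phi(a)m$, $\phi(m)=1$, and then $a_\alpha\equiv m$ works with commutator identically zero — or (ii) normalize $a_\alpha$ by dividing by $\|F_\alpha(u_\alpha)\|$, which stays bounded away from $0$ because its $\phi$-value tends to $1$. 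I expect the cleanest route in the paper's framework is (i): use \cite[Theorem 3.9]{sah3} to get left $\phi$-contractibility, take the resulting idempotent-like element $m$, and observe directly that the constant net $a_\alpha=m$ witnesses $\phi$-inner amenability; the only thing to check is that $am-ma=\phi(a)m-ma$ tends to $0$, which requires knowing $ma=\phi(a)m$ as well, i.e.\ that $A$ is also right $\phi$-contractible — and here one would instead argue that left $\phi$-contractibility together with the approximate identity already forces the bounded $\phi$-inner amenability net via the commutator computation above, avoiding the need for two-sided contractibility.
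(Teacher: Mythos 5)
Your first computation goes off the rails at a specific identity: from $\rho_\alpha$ being an $A$-bimodule map, the map $F_\alpha=(\mathrm{id}\otimes\phi)\circ\rho_\alpha$ satisfies $F_\alpha(ax)=aF_\alpha(x)$ and $F_\alpha(xa)=\phi(a)F_\alpha(x)$; it is \emph{not} a right module map, so your step ``$F_\alpha(e_j)a=F_\alpha(e_ja)\to F_\alpha(a)$'' is unjustified (what is true is $F_\alpha(e_ja)=e_jF_\alpha(a)=\phi(a)F_\alpha(e_j)$, which says nothing about $F_\alpha(e_j)a$). Together with the unboundedness of the $\rho_\alpha$, which you correctly flag, the direct route does not produce the bounded commutator net. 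Your fallback (i) then rests on the assertion that left $\phi$-contractibility plus an approximate identity ``already forces'' the bounded $\phi$-inner amenability net ``via the commutator computation above''---but that computation is exactly the flawed one, and no argument is given; note that one-sided contractibility genuinely cannot suffice, since the Remark following this lemma exhibits a biprojective algebra with a right identity that is right $\phi$-contractible yet not $\phi$-inner amenable (and its opposite algebra gives the left-handed counterpart). So as written there is a real gap at the commutator condition.

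The fix is the step you explicitly avoided. Because the approximate identity is two-sided, \cite[Theorem 3.9]{sah3} yields both left and right $\phi$-contractibility, giving $m_1,m_2\in A$ with $am_1=\phi(a)m_1$, $m_2a=\phi(a)m_2$ and $\phi(m_1)=\phi(m_2)=1$. Then the one-line identity
\begin{equation*}
m_1=\phi(m_2)m_1=m_2m_1=\phi(m_1)m_2=m_2
\end{equation*}
shows the two elements coincide, so $m:=m_1$ satisfies $am=ma=\phi(a)m$ and $\phi(m)=1$; the constant net $a_\alpha\equiv m$ has identically zero commutators and witnesses $\phi$-inner amenability, with boundedness trivial. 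This is the paper's proof, and it is where the two-sidedness of the approximate identity is actually used; your proposal never exploits it.
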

\begin{proof}
Suppose that $A$ is approximate biprojective. Using \cite[Theorem
3.9]{sah3}, existence of approximate identity implies that $A$ is
left and right $\phi$-contractible. Then there exist $m_{1}$ and
$m_{2}$ in $A$ such that
$$am_{1}=\phi(a)m_{1}(m_{2}a=\phi(a)m_{2})\quad
\phi(m_{1})=\phi(m_{2})=1\qquad(a\in A),$$ respectively. Since
$$m_{1}=\phi(m_{2})m_{1}=m_{2}m_{1}=\phi(m_{1})m_{2}=m_{2},$$ one
can see that $$am_{1}=m_{1}a=\phi(a)m_{1}\quad \phi(m_{1})=1, (a\in
A).$$ It follows that $A$ is $\phi$-inner amenable.
\end{proof}
\begin{Remark}
There exists a matrix algebra which is approximate biprojective but
it is not $\phi$-inner amenable. Then the converse of previous Lemma
is not always true.

To see this, let $A=\left(\begin{array}{cc} 0&\mathbb{C}\\
0&\mathbb{C}\\
\end{array}
\right)$ and also let $a_{0}=\left(\begin{array}{cc} 0&1\\
0&1\\
\end{array}
\right)$. Define $\rho:A\rightarrow A\otimes_{p}A$ by
$\rho(a)=a\otimes a_{0}$ for every $a\in A$. It is easy to see that
$\rho$ is a bounded $A$-bimodule morphism and $$\pi_{A}\circ
\rho(a)=a,\qquad (a\in A).$$ Then $A$ is biprojective and it follows
that $A$ is approximate biprojective.
Set $\phi(\left(\begin{array}{cc} 0&a\\
0&b\\
\end{array}
\right))=b$ for every $a,b\in \mathbb{C}$. It is easy to see that
$\phi\in\Delta(A)$. We claim that $A$ is not $\phi$-inner amenable.
We suppose conversely that $A$ is  $\phi$-inner amenable. Then there
exists a bounded net $(a_{\alpha})$ in $A$ such that
$$aa_{\alpha}-a_{\alpha}a\rightarrow 0,\quad \phi(a_{\alpha})\rightarrow 1\qquad (a\in A).$$
It is easy to see that $ab=\phi(b)a$ for every $a\in A.$ Hence we
have $$0=\lim_{\alpha} a_{0}a_{\alpha}-a_{\alpha}a_{0}=\lim
\phi(a_{\alpha})a_{0}-\phi(a_{0})a_{\alpha}=\lim a_{0}-a_{\alpha},$$
It follows that $a_{0}=\lim a_{\alpha}$. Hence for each $a\in A$, we
have $$aa_{0}=a_{0}a,\quad \phi(a_{0})=1.$$ It follows that
$a=\phi(a)a_{0}$. Thus $\dim A=1$ which is a contradiction.
\end{Remark}
\section{Examples of semigroup algebras related to the matrix algebras}
\begin{Example}
Suppose that $A$ is a Banach algebra and $I$ is a non-empty set. Put
$B=UP(I,A)$. It is obvious that $B$ with matrix multiplication can
be observed as a semigroup. Equip this semigroup with the discrete
topology and denote it with $S_{B}.$ Suppose that $A$ has a non-zero
idempotent.  We claim that $\ell^{1}(S_B)$ is not amenable, whenever
$I$ is an infinite set. Suppose conversely that $\ell^{1}(S_B)$ is
amenable. Let $e$ be an idempotent for $A$. $E_{i,i}$ for a matrix
belongs to $B$ which its $(i,i)$-th entry is $e$, otherwise is $0$.
It is easy to see that $E_{i,i}$ is an idempotent for the semigroup
$S_B$, for every $i\in I.$ So the set of idempotents of $S_{B}$ is
infinite, whenever $I$ is infinite. Thus by \cite[Theorem 2]{dun}
$\ell^{1}(S_B)$ is not amenable which is contradiction.

Suppose that $A$ is a Banach algebra with a left identity, also
suppose that $I$ is an ordered set with smallest element. We also
claim that $\ell^{1}(S_B)$ is never approximate biprojective. To see
this suppose conversely that $\ell^{1}(S_{B})$ is approximately
biprojective. We denote augmentation character on $\ell^{1}(S_B)$ by
$\phi_{S_{B}}$. It is easy to see that $\delta_{\hat{0}}\in S_{B}$
and $\phi_{S_{B}}(\delta_{\hat{0}})=1,$ where $\hat{0}$ is denoted
for the zero matrix belongs to $S_{B}.$ One can see that the center
of $S_{B}$, say $Z(S_{B})$, is non-empty, because $\hat{0}$ belongs
to $Z(S_{B})$. Using \cite[Proposition 3.1(ii)]{sah3}, one can see
that $\ell^{1}(S_{B})$ is left $\phi_{S_{B}}$-contractible. Let
$i_{0}$ be an smallest element of $I$. Define
$$J=\{(a_{i,j})\in
 S_{B}|a_{i,j}=0,\text{for all}\quad i\neq i_{0}\},$$ it is easy to
see that $J$ is an ideal of $S_{B}$, then by \cite[page 50]{dales
semi} $\ell^{1}(J)$ is a closed ideal of $\ell^{1}(S_{B})$. Since
$\phi_{S_{B}}|_{\ell^{1}(J)}$ is non-zero, $\ell^{1}(J)$ is left
$\phi_{S_{B}}$-contractible. Thus there exists  $m\in \ell^{1}(J)$
such that $am=\phi_{S_{B}}(a)m$ and $\phi_{S_{B}}(m)=1,$ for every
$a\in A.$ On the other hand since $A$ has a left  identity, then $J$
has a left identity. Thus by the same argument as in the proof of
\cite[Proposition 3.1(ii)]{sah3} we have
$$m(j)=m(e_{l}j)=\delta_{j}m(e_{l})=\phi_{S_{B}}(\delta_{j})m(e_{l})=m(e_{l})\quad (j\in J),$$
where $e_{l}$ is a left unit for $J.$ It follows that $m$ is a
constant function belongs to $\ell^{1}(J)$. Since
$\phi_{S_{B}}(m)=1,$ then $m\neq 0$ which implies that $J$ is finite
which is impossible.
\end{Example}

\end{document}